\numberwithin{equation}{section}
\newtheorem{theorem}{Theorem}[section]
\newtheorem{proposition}[theorem]{Proposition}
\newtheorem{corollary}[theorem]{Corollary}
\theoremstyle{definition}
\theoremstyle{remark}
\newtheorem{remark}[theorem]{Remark}
\begin{document}

\title{Factorization of spread polynomials}

\author[J. Cigler]{
Johann Cigler\,\orcidlink{0000-0002-5203-3977
}}
\address{Department of Mathematics,  Oskar-Morgenstern-Platz 1, 1090 Vienna, Austria}
\email{johann.cigler@univie.ac.at}
\author[H.-C.~Herbig]{Hans-Christian Herbig\,\orcidlink{0000-0003-2676-3340}}
\address{Departamento de Matem\'{a}tica Aplicada, Universidade Federal do Rio de Janeiro,
Av. Athos da Silveira Ramos 149, Centro de Tecnologia - Bloco C, CEP: 21941-909 - Rio de Janeiro, Brazil}
\email{herbighc@gmail.com}

\keywords{spread polynomials, cyclotomic polynomials, Fibonacci numbers}
\subjclass[2020]{primary 11B83, secondary 11B39, 97G40}

\begin{abstract} 
We present a proof of a conjecture of Goh and Wildberger on the factorization of the spread polynomials.
We indicate how the factors can be effectively calculated and exhibit a connection to the factorization of Fibonacci numbers into primitive parts. 
\end{abstract}

\maketitle
\tableofcontents
\section{Introduction}\label{sec:intro}

The sequence of \textit{spread polynomials} $( S_{n}( x))_{n\geq 1}$ of Norman J. Wildberger is uniquely defined by the requirement
\begin{align}
S_{n}\left(\sin^{2} \theta \right) =\sin^{2}( n\theta ) .
\end{align}
The spread polynomials play a central role in Wildberger's \textit{rational trigonometry} \cite{divine}. In order to make plane geometry free of transcendental expressions and square roots he suggests to replace lengths by their squares, which are referred to as \textit{quadrances}, and angles by \textit{spreads}.
In the right angled triangle with sides $ a,b$ and $ c$
\begin{center}
\tikzset{every picture/.style={line width=0.75pt}} 

\begin{tikzpicture}[x=0.5pt,y=0.5pt,yscale=-1,xscale=1]

\draw   (174.79,8.94) -- (16,127) -- (174.79,127) -- cycle ;
\draw    (45.33,104.58) -- (52.27,127.52) ;

\draw (102.76,131.21) node [anchor=north west][inner sep=0.75pt]    {$a$};
\draw (187.01,70.07) node [anchor=north west][inner sep=0.75pt]    {$b$};
\draw (85.9,47.9) node [anchor=north west][inner sep=0.75pt]    {$c$};
\draw (36.9,110.67) node [anchor=north west][inner sep=0.75pt]    {$s$};
\end{tikzpicture}
\end{center}
the spread $ s\in [ 0,1]$ is defined as the ratio $ s=b^{2} /c^{2}$ of the quadrance of the opposite leg $b$ by the quadrance of the hypothenuse $ c$. The geometric property of the spread polynomials that makes them play a central role in rational trigonometry is the relation 
\begin{align}
    s_{n} =S_{n}( s)
    \end{align}
for an arrangement of $ n+1$ lines which all meet in a single point and whose spreads of neighboring lines all coincide. Here $ s_{n}$ is the spread between the extremal lines of the arrangement. 
\begin{center}
\tikzset{every picture/.style={line width=0.75pt}} 

\begin{tikzpicture}[x=0.5pt,y=0.5pt,yscale=-1,xscale=1]

\draw    (12.65,106.85) -- (268.12,107.92) ;
\draw    (13.18,116.98) -- (262.25,73.78) ;
\draw    (17.18,126.58) -- (257.38,39.58) ;
\draw    (21.18,135.91) -- (240.12,9.55) ;
\draw    (117.67,107.42) -- (116.67,99.42) ;
\draw    (116.67,99.42) -- (114.87,92.04) ;
\draw    (111.93,85.11) -- (114.87,92.04) ;
\draw [color={rgb, 255:red, 155; green, 155; blue, 155 }  ,draw opacity=1 ]   (161.67,106.65) -- (149.67,61.59) ;

\draw (131.67,91.71) node [anchor=north west][inner sep=0.75pt]    {$s$};
\draw (130.33,79.44) node [anchor=north west][inner sep=0.75pt]    {$s$};
\draw (127.93,68.51) node [anchor=north west][inner sep=0.75pt]    {$s$};
\draw (176.92,66.91) node [anchor=north west][inner sep=0.75pt]  [color={rgb, 255:red, 155; green, 155; blue, 155 }  ,opacity=1 ]  {$s_{3}$};

\end{tikzpicture}
\end{center}

The spread polynomials can be written in terms of the Lucas polynomials $L_{n}( x)$
\begin{align}
S_{n}( x) =\frac{2-L_{n}( 2-4x)}{4} .
\end{align}
Following \cite{numerology} we consider the closely related \textit{zpread polynomials}
\begin{align}\label{eq:Zdef}
    Z_{n}( x) =4S_{n}\left(\frac{x}{4}\right) =2-L_{n}( 2-x) .
\end{align}    
A conjecture of Wildberger's honours student Shuxiang Goh (see \cite{Wildegg}), stated in terms of $Z_{n}( x)$, claims the following.
\begin{theorem}\label{thm:GohWild}
There is a sequence of polynomials $( \Phi _{d}( x))_{d\geq 1}$, $\Phi_d(x)\in\mathbb{Z}[x]$, such that $Z_{n}( x) =\prod _{d|n} \Phi _{d}( x)$ and
$\deg( \Phi _{d}) =\varphi ( d)$, where $\varphi ( d)$ is Euler's totient function.
\end{theorem}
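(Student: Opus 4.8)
The plan is to linearize everything through the substitution $x = 2 - t - t^{-1}$, where $t$ is treated as an invertible variable. The Lucas polynomials satisfy the Laurent-polynomial identity $L_n(t + t^{-1}) = t^n + t^{-n}$, which follows at once from $L_0 = 2$, $L_1(y) = y$ and the recurrence $L_{n+1} = y\,L_n - L_{n-1}$ by induction. Hence, after setting $2 - x = t + t^{-1}$, formula \eqref{eq:Zdef} becomes
\begin{align}
Z_n(x) = 2 - \bigl(t^n + t^{-n}\bigr) = -t^{-n}\bigl(t^n - 1\bigr)^2 .
\end{align}
So, up to the unit $-t^{-n}$, the zpread polynomial $Z_n$ is the square of $t^n - 1$. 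First I would exploit this to read off the factorization combinatorially: grouping the $n$-th roots of unity by their exact order and invoking the classical cyclotomic factorization $t^n - 1 = \prod_{d\mid n}\Phi^{\mathrm{cyc}}_d(t)$ gives
\begin{align}
Z_n(x) = -t^{-n}\prod_{d\mid n}\Phi^{\mathrm{cyc}}_d(t)^2 = -\prod_{d\mid n} t^{-\varphi(d)}\,\Phi^{\mathrm{cyc}}_d(t)^2 ,
\end{align}
where the last equality uses $\sum_{d\mid n}\varphi(d) = n$ to distribute the factor $t^{-n}$.

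This motivates defining the candidate factors directly from the cyclotomic polynomials. For $d \geq 2$ I would set
\begin{align}
\Phi_d(x) := t^{-\varphi(d)}\,\Phi^{\mathrm{cyc}}_d(t)^2\,\Big|_{\,t + t^{-1}\,=\,2 - x},
\end{align}
and $\Phi_1(x) := x$, which equals $-t^{-1}\Phi^{\mathrm{cyc}}_1(t)^2 = -t^{-1}(t-1)^2$ rewritten in $x$; this is where the lone global sign of the previous display is absorbed. The degree statement is then automatic. Since $\Phi^{\mathrm{cyc}}_d$ is self-reciprocal of degree $\varphi(d)$, its square $\Phi^{\mathrm{cyc}}_d(t)^2$ is a palindromic Laurent polynomial supported in degrees $-\varphi(d),\dots,\varphi(d)$ with top coefficient $1$; therefore $t^{-\varphi(d)}\Phi^{\mathrm{cyc}}_d(t)^2$ is invariant under $t \mapsto t^{-1}$ and is a polynomial of degree exactly $\varphi(d)$ in $t + t^{-1} = 2 - x$, i.e. $\deg\Phi_d = \varphi(d)$. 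Multiplying these factors over $d\mid n$ reproduces the displayed product for $Z_n$, establishing $Z_n = \prod_{d\mid n}\Phi_d$.

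The crux — and the step I expect to be the main obstacle — is integrality, $\Phi_d \in \Z[x]$, the analogue of the classical fact that cyclotomic polynomials have integer coefficients. It rests on two ingredients I would invoke: first, $\Phi^{\mathrm{cyc}}_d \in \Z[t]$; and second, every Laurent polynomial in $\Z[t,t^{-1}]$ invariant under $t \mapsto t^{-1}$ lies in $\Z[t + t^{-1}]$ (proved by descending induction on the top degree, subtracting an integer multiple of the appropriate power of $t + t^{-1}$, whose leading coefficient is $1$). Applying the second fact to $t^{-\varphi(d)}\Phi^{\mathrm{cyc}}_d(t)^2 \in \Z[t,t^{-1}]$ shows $\Phi_d$ is an integer polynomial in $2 - x$, hence in $\Z[x]$. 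The delicate bookkeeping is the small-order behaviour $d \in \{1,2\}$, where $\varphi(d)$ is odd and $\Phi^{\mathrm{cyc}}_d$ is linear: here the ``square-then-symmetrize'' mechanism must be checked by hand (it yields $\Phi_1 = x$ and $\Phi_2 = 4 - x$), and one must verify that the single sign flip placed in $\Phi_1$ correctly reproduces the leading coefficient $(-1)^{n+1}$ of $Z_n$, since $\prod_{d\mid n}(-1)^{\varphi(d)} = (-1)^n$.

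Alternatively, and more self-containedly, I would prove integrality by the recursive argument used for ordinary cyclotomic polynomials: set $\Phi_1 = Z_1 = x$ and, for $d > 1$, $\Phi_d = Z_d / \prod_{e\mid d,\, e<d}\Phi_e$. The factorization established above shows this division is exact over $\C$; by the inductive hypothesis the denominator lies in $\Z[x]$ with leading coefficient $\pm 1$, so monic division (Gauss's lemma) keeps the quotient in $\Z[x]$, whence $\Phi_d \in \Z[x]$. The fact that the \emph{same} factor $\Phi_d$ occurs in every $Z_n$ with $d\mid n$ — the $n$-independence on which the product formula depends — is exactly the content of the root-grouping step, because the value $2 - t - t^{-1}$ attached to a primitive $d$-th root of unity $t$ does not depend on the ambient modulus $n$.
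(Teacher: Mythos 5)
Your proof is correct, and it reaches the factorization by a more direct route than the paper does. The paper first determines the roots of $L_n(x)-2$ together with their multiplicities via the explicit square identities of Proposition 3.1, invokes Grubb's factorization of $L_n(x)-2$ into the minimal polynomials $\psi_k$ of $2\cos(2\pi/k)$, proves separately that $\psi_k$ equals $C_k(\alpha(x))/\alpha(x)^{\varphi(k)/2}$, and only then changes variables to get the $\phi_k$ and sets $\Phi_k=\phi_k^2$. You instead read the squared structure directly off the identity $Z_n(x)=-t^{-n}(t^n-1)^2$ under $t+t^{-1}=2-x$ (the same identity the paper records as $Z_n=-(\lambda^n-1)^2/\lambda^n$, though there it is not used as the engine of the proof), square the classical factorization $t^n-1=\prod_{d\mid n}C_d(t)$, and distribute $t^{-n}$ using $\sum_{d\mid n}\varphi(d)=n$; integrality then follows from the standard fact that symmetric Laurent polynomials over $\Z$ lie in $\Z[t+t^{-1}]$, which is the same symmetrization device the paper uses inside its Theorem 3.2. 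The net effect is that you need neither the root-multiplicity analysis nor any irreducibility or minimal-polynomial input, and your handling of the exceptional indices $d\in\{1,2\}$ and of the global sign (leading coefficient $(-1)^{n-1}$) is accurate. What the paper's longer route buys is the identification of $\phi_d$ as the \emph{irreducible} minimal polynomial of $4\sin^2(\pi/d)$, which the theorem as stated does not need but which the paper exploits later (e.g.\ in the proof of Equation \eqref{eq:phiPot2Comp}); your construction produces the same polynomials but does not by itself certify that the square roots $\phi_d$ of your $\Phi_d$ are irreducible over $\Q$.
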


The aim of this note is to prove the theorem as a consequence of \eqref{eq:Zdef} and known results about the Chebyshev
polynomials obtained in \cite{Grubb}, \cite{Guertas} and \cite{WZ}. Moreover, we obtain some concrete facts about the polynomials $\Phi _{d}( x)$, some of which had already been conjectured in \cite{numerology}.

\vspace{2mm}

\noindent \emph{Acknowledgements.} We would like to thank Tri Nguyen for generously sharing the idea \cite{TriNguyen} with us.

\section{Some well-known facts about Lucas polynomials}\label{sec:Lucas}

The Lucas polynomials $L_{n}( x)$ are defined by the recursion 
\begin{align}
L_{n}( x) =xL_{n-1}( x) -L_{n-2}( x)
\end{align}
for $n\geq 2$ with initial values $L_{0}( x) =2$ and $L_{1}( x) =x$.\footnote{Up to the index $n=0$ the sequence of Lucas polynomials
coincides with the sequence of the pyramidal polynomials from \cite{numerology}, the latter coming from a Riordan array.}
The first terms are (cf. \cite[entry A034807]{OEIS}):
\begin{align*}
\begin{array}{ c|c c c c c c c c }
n & 0 & 1 & 2 & 3 & 4 & 5 & 6 & \cdots \\
\hline
L_{n}( x) & 2 & x & -2+x^{2} & -3x+x^{3} & 2-4x^{2} +x^{4} & 5x-5x^{3} +x^{5} & -2+9x^{2} -6x^{4} +x^{6} & \cdots 
\end{array}.
\end{align*}
Binet's formula gives
\begin{align}
L_{n}( x) =( \alpha ( x))^{n} +( \beta ( x))^{n}
\end{align}
with $\alpha ( x) =\frac{x+\sqrt{x^{2} -4}}{2}$ and $\beta ( x) =\frac{1}{\alpha ( x)} =\frac{x-\sqrt{x^{2} -4}}{2}$.
The Lucas polynomials are related to the Chebyshev polynomials of the first kind $T_{n}( x)$ by
\begin{align}
L_{n}( x) =2T_{n}\left(\frac{x}{2}\right)
\end{align}
and are characterized by 
\begin{align}\label{eq:LucTrig}
L_{n}\left( z+\frac{1}{z}\right) =z^{n} +\frac{1}{z^{n}}
\end{align}
for $n\geq 1$ and \ $z\neq 0$ because $z^{n} +\frac{1}{z^{n}} =\left( z +\frac{1}{z}\right)\left( z^{n-1} +\frac{1}{z^{n-1}}\right) -\left( z^{n-2} +\frac{1}{z^{n-2}}\right)$.
For $z=e^{\sqrt{-1} \theta }$ Equation \eqref{eq:LucTrig} implies 
\begin{align}L_{n}( 2\cos \theta ) =2\cos( n\theta ).
\end{align}
Equation \eqref{eq:LucTrig} also implies 
\begin{align}\label{eq:Lmultiplicative}
L_{mn}( x) =L_{m}( L_{n}( x))
\end{align}
for all $m,n\geq 0$ since $z^{mn} +\frac{1}{z^{mn}} =\left( z^{n}\right)^{m} +\left(\frac{1}{z^{n}}\right)^{m}$.

Let us also mention that $ z^{2n} +\frac{1}{z^{2n}} -2=\left( z^{n} +\frac{1}{z^{n}} -2\right)\left( z^{n} +\frac{1}{z^{n}} +2\right)$ implies
\begin{align} \label{eq:L1stBinomi}
L_{2n}( x) -2=( L_{n}( x) -2)( L_{n}( x) +2)
\end{align}
and that
\begin{align}\label{eq:Levensq}
L_{2n}( x) +2=( L_{n}( x))^{2}
\end{align}
because $ z^{2n} +\frac{1}{z^{2n}} +2=\left( z^{n} +\frac{1}{z^{n}}\right)^{2}$.

\section{Factorization of the polynomials $L_{n}( x) -2$}\label{sec:LucFact}
\begin{proposition}
For $n\geq 1$ the roots of $f_n(x):=L_{n}( x) -2$ are $\nu _{k} =2\cos\left(\frac{2\pi k}{n}\right)$ with $0\leq k\leq \frac{n}{2}$.
For odd $n=2m+1$ the roots $\nu _{k}$ with $1\leq k\leq m$ are double roots and $\nu _{0} =2$ is a simple root.
For even $n=2m$ the roots $\nu _{k}$ with $1\leq k< m$ are double roots and $\nu _{0} ,\nu _{m}$ are  simple roots.
\end{proposition}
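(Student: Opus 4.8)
The plan is to read the roots directly off the trigonometric identity $L_n(2\cos\theta) = 2\cos(n\theta)$ recorded in Section~\ref{sec:Lucas}, and then to pin down the multiplicities by differentiating that identity once and counting degrees. The substitution $x = 2\cos\theta$ turns $f_n(x) = 0$ into $2\cos(n\theta) - 2 = 0$, i.e.\ $\cos(n\theta) = 1$, whose solutions are $\theta = \tfrac{2\pi k}{n}$ with $k \in \mathbb{Z}$. Since $\theta \mapsto 2\cos\theta$ is a strictly decreasing bijection from $[0,\pi]$ onto $[-2,2]$, the distinct real roots of $f_n$ are exactly $\nu_k = 2\cos\!\left(\tfrac{2\pi k}{n}\right)$ for $0 \le k \le \tfrac{n}{2}$; indices outside this range merely repeat these values by the evenness and $2\pi$-periodicity of cosine.

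To extract the multiplicities I would differentiate $L_n(2\cos\theta) = 2\cos(n\theta)$ with respect to $\theta$, obtaining
\begin{align*}
L_n'(2\cos\theta)\,\sin\theta = n\,\sin(n\theta).
\end{align*}
At an interior parameter $\theta = \tfrac{2\pi k}{n}$ with $\sin\theta \ne 0$ (equivalently $\nu_k \ne \pm 2$) the right-hand side vanishes, since $\sin(n\theta) = \sin(2\pi k) = 0$, and hence $f_n'(\nu_k) = L_n'(\nu_k) = 0$. Thus every interior root is at least a double root. The endpoint $\nu_0 = 2$, and for even $n = 2m$ also $\nu_m = -2$, are at least simple roots because $f_n$ vanishes there.

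The final step is a degree count, which closes the argument in one stroke. For odd $n = 2m+1$ there are $m$ interior roots together with the single endpoint $\nu_0$, giving total multiplicity at least $2m + 1 = n$; for even $n = 2m$ there are $m-1$ interior roots together with the two endpoints $\nu_0, \nu_m$, giving at least $2(m-1) + 2 = n$. As $\deg f_n = \deg L_n = n$, each of these lower bounds is forced to be an equality: the interior roots are exactly double, the endpoints exactly simple, and no other roots occur.

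I expect the endpoint analysis to be the only delicate point, since there $\sin\theta = 0$ degenerates the differentiated identity. The degree count avoids this issue entirely---any surplus multiplicity at an endpoint would push the total above $n$---but if a direct check is preferred, the limits $\lim_{\theta\to 0}\tfrac{n\sin(n\theta)}{\sin\theta} = n^2$ and, for even $n$, $\lim_{\theta\to\pi}\tfrac{n\sin(n\theta)}{\sin\theta} = -n^2$ give $L_n'(2) = n^2$ and $L_n'(-2) = -n^2$, both nonzero, confirming simplicity of the endpoints independently.
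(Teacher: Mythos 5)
Your proof is correct, and it reaches the multiplicities by a genuinely different mechanism than the paper. Both arguments share the same skeleton: read the roots $\nu_k$ off the identity $L_n(2\cos\theta)=2\cos(n\theta)$, establish a lower bound on each multiplicity, and close with the degree count $\deg f_n = n$. Where you differ is in the "at least double" step. You differentiate the trigonometric identity to get $L_n'(2\cos\theta)\sin\theta = n\sin(n\theta)$ and observe that at interior parameters $\sin(n\theta)=0$ while $\sin\theta\neq 0$, forcing $f_n'(\nu_k)=0$; this is the standard derivative test for a repeated root and is arguably the more elementary route. The paper instead proves the explicit perfect-square identities $(L_{2m+1}(x)-2)(x-2)=(L_{m+1}(x)-L_m(x))^2$ and $(L_{2m}(x)-2)(x-2)(x+2)=(L_{m+1}(x)-L_{m-1}(x))^2$ by substituting $x=z+\tfrac1z$; evenness of all multiplicities in a square then forces the interior roots of $f_n$ to be at least double and the endpoints to appear to odd order. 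The paper's identities carry extra information (explicit square-root polynomials for $f_n$ up to linear factors) that is in the spirit of the rest of the article, whereas your derivative argument is shorter and self-contained. Your endpoint limits $L_n'(\pm 2)=\pm n^2$ are a nice independent confirmation, though, as you note, the degree count already makes them unnecessary. One small presentational point: the sentence asserting that the $\nu_k$ are \emph{exactly} the distinct real roots is only justified once the degree count at the end rules out further roots, so it would read better as "these are roots, distinct because $2\cos$ is injective on $[0,\pi]$," deferring exhaustiveness to the final step.
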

\begin{proof}
Since $f_{n}\left( 2\cos\left(\frac{2\pi k}{n}\right)\right) =L_{n}\left( 2\cos\left(\frac{2\pi k}{n}\right)\right) -2=2\cos( 2\pi k) -2=0$ each $\nu _{k} =2\cos\left(\frac{2\pi k}{n}\right)$ for $k\in \mathbb{Z}$ is a root of $f_{n}( x)$. All different roots of this form are given by $0\leq k\leq \frac{n}{2}$. 

We claim that in this way we obtain all the roots of $f_{n}( x)$ because the number of roots above, counted with multiplicity, in fact already amounts to $n$.

To show the claim for $n=2m+1$ it suffices to show that 
\begin{align}\label{eq:evenSqId}
( L_{2m+1}( x) -2)( x-2) =( L_{m+1}( x) -L_{m}( x))^{2},
\end{align}
which is a square of a polynomial. Identity \eqref{eq:evenSqId} in turn can be shown by setting $x=z+\frac{1}{z}$ and evaluating
\begin{align*}
\left( z^{2m+1} +\frac{1}{z^{2m+1}} -2\right)\left( z +\frac{1}{z} -2\right)& =\frac{\left( z^{2( 2m+1)} -2z^{2m+1} +1\right)\left( z^{2} -2z+1\right)}{z^{2m+2}} \\&=\left(\frac{\left( z^{2m+1} -1\right)( z -1)}{z^{m+1}}\right)^{2} =\left(\left( z^{m+1} +\frac{1}{z^{m+1}}\right) -\left( z^{m} +\frac{1}{z^{m}}\right)\right)^{2}
\end{align*}
For $n=2m$ in turn it suffices to show the identity 
\begin{align}
    ( L_{2m}( x) -2)( x-2)( x+2) =( L_{m+1}( x) -L_{m-1}( x))^{2} .
    \end{align}
This, however, follows from
$$\frac{z^{2m} +\frac{1}{z^{2m}} -2}{\left( z +\frac{1}{z} -2\right)\left( z +\frac{1}{z} +2\right)} =\left(\frac{\left( z^{2m} -1\right)}{z^{m-1}\left( z^{2} -1\right)}\right)^{2}$$ 
and $\left( z +\frac{1}{z} -2\right)\left( z +\frac{1}{z} +2\right) =\left(\frac{\left( z^{2} -1\right)}{z}\right)^{2}$ because 
\begin{align*}
\left( z^{2m} +\frac{1}{z^{2m}} -2\right)\left(\frac{\left( z^{2} -1\right)}{z}\right)^{2} &=\left(\frac{\left( z^{2m} -1\right)}{z^{m-1}\left( z^{2} -1\right)}\left(\frac{\left( z^{2} -1\right)}{z}\right)^{2}\right)^{2}=\left(\frac{\left( z^{2m} -1\right)\left( z^{2} -1\right)}{z^{m+1}}\right)^{2}
\\&=\left( z^{m+1} -\frac{1}{z^{m-1}} -z^{m-1} +\frac{1}{z^{m+1}}\right)^{2} .
\end{align*}
\end{proof}

Next we use the fact that a polynomial $p( x)$ with an algebraic number $a$ as a root has the minimal polynomial of $a$ as a factor.    

The minimal polynomial $ \psi _{n}( x)$ of 2$ \cos\left(\frac{2\pi }{n}\right)$ is
\begin{align}\label{eq:psin}
\psi _{n}( x) \ =\prod _{\gcd( j,n) =1,\ 0< j< n/2}\left( x-2\cos\frac{2\pi j}{n}\right)
\end{align}
for $ n >2$ with $ \psi _{1}( x) =x-2$ and $ \psi _{2}( x) =x+2$.
The first examples are:
\begin{align*} \begin{array}{ c|c c c c c c c c c c }
n & 1 & 2 & 3 & 4 & 5 & 6 & 7 & 8 & 9 & \cdots \\
\hline
\psi _{n}( x) & -2+x & 2+x & 1+x & x & -1+x+x^{2} & -1+x & -1-2x+x^{2} +x^{3} & -2+x^{2} & 1-3x+x^{3} & \cdots 
\end{array}.
\end{align*}
\begin{theorem}
Let $\alpha ( x) =\frac{x+\sqrt{x^{2} -4}}{2}$ and $C_{n}( x)$ be the $n$th cyclotomic polynomial. Then for $n\geq 3$ the minimal 
polynomial of $2\cos\left(\frac{2\pi }{n}\right)$ is
\begin{align}
\psi _{n}( x) =\frac{C_{n}( \alpha ( x))}{( \alpha ( x))^{\varphi ( n) /2}} .
\end{align}
\end{theorem}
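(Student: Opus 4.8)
The plan is to evaluate the right-hand side directly and recognize it as the product \eqref{eq:psin}, which is taken as the known description of the minimal polynomial. Recall from Binet's formula that $\beta(x)=1/\alpha(x)$, so that $\alpha(x)\beta(x)=1$ and $\alpha(x)+\beta(x)=x$; equivalently, writing $x=z+\tfrac1z$ one has $\alpha(x)=z$ and $\beta(x)=\tfrac1z$. Over $\C$ the cyclotomic polynomial factors as
\begin{align*}
C_{n}( y) =\prod _{\substack{1\leq k< n\\ \gcd( k,n) =1}}\big( y-\zeta ^{k}\big) ,
\end{align*}
where $\zeta $ is a primitive $n$th root of unity, and this product has $\varphi ( n)$ factors.

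The key observation is that for $n\geq 3$ the primitive residues come in complex-conjugate pairs $\{ k,n-k\}$ with $k\neq n-k$: indeed $\gcd( k,n) =1$ iff $\gcd( n-k,n) =1$, and $k=n-k$ would force $n=2k$, hence $\gcd( k,n) =k=n/2>1$, impossible for $n\geq 3$. In particular $\varphi ( n)$ is even and there are exactly $\varphi ( n) /2$ such pairs. First I would substitute $y=\alpha ( x)$ and group the factors of $C_{n}( \alpha ( x))$ into these pairs. Writing $\alpha =\alpha ( x)$ and using $\zeta ^{-k}=\zeta ^{n-k}$ together with $\zeta ^{k}\zeta ^{-k}=1$, each pair contributes
\begin{align*}
\big( \alpha -\zeta ^{k}\big)\big( \alpha -\zeta ^{-k}\big) =\alpha ^{2}-\big( \zeta ^{k}+\zeta ^{-k}\big) \alpha +1=\alpha ^{2}-2\cos\!\Big(\tfrac{2\pi k}{n}\Big) \alpha +1 .
\end{align*}

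The next step is to divide each such quadratic factor by one power of $\alpha $ and use $\alpha +\alpha ^{-1}=\alpha +\beta =x$ to obtain
\begin{align*}
\frac{\alpha ^{2}-2\cos\!\big(\tfrac{2\pi k}{n}\big) \alpha +1}{\alpha } =\alpha +\frac{1}{\alpha } -2\cos\!\Big(\tfrac{2\pi k}{n}\Big) =x-2\cos\!\Big(\tfrac{2\pi k}{n}\Big) .
\end{align*}
Since there are $\varphi ( n) /2$ pairs, dividing the whole product $C_{n}( \alpha ( x))$ by $\alpha ( x)^{\varphi ( n) /2}$ distributes exactly one power of $\alpha $ to each pair, and letting $k$ run over the representatives $0<k<n/2$ of the pairs yields
\begin{align*}
\frac{C_{n}( \alpha ( x))}{\alpha ( x)^{\varphi ( n) /2}} =\prod _{\substack{0<k<n/2\\ \gcd( k,n) =1}}\Big( x-2\cos\tfrac{2\pi k}{n}\Big) =\psi _{n}( x) ,
\end{align*}
which is precisely \eqref{eq:psin}. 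This simultaneously shows that the quotient is an honest polynomial in $x$ (the square roots hidden in $\alpha $ cancel upon pairing) and identifies it with the minimal polynomial, the minimality itself being the classical input we are allowed to assume.

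I expect the only genuine subtlety to be the bookkeeping around the pairing: one must verify that no primitive residue is its own conjugate (handled by $n\geq 3$), that $\varphi ( n)$ is therefore even so that $\varphi ( n) /2$ is an integer, and that distributing a single power of $\alpha $ to each of the $\varphi ( n) /2$ conjugate pairs exactly balances the exponent in the denominator. Everything else reduces to the elementary identity $\big( \alpha -\zeta ^{k}\big)\big( \alpha -\zeta ^{-k}\big) /\alpha =x-2\cos( 2\pi k/n)$ together with $\alpha +\beta =x$ and $\alpha \beta =1$.
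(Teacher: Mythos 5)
Your proof is correct, but it takes a different route from the paper's. You expand $C_{n}(\alpha(x))$ into its $\varphi(n)$ linear factors over $\C$, pair the conjugate primitive roots $\zeta^{k},\zeta^{-k}$, and check that each pair divided by one copy of $\alpha$ collapses to $x-2\cos(2\pi k/n)$; the quotient is then literally the product \eqref{eq:psin}, so you identify \emph{all} roots of the right-hand side and integrality of the quotient comes for free from $\psi_n$ being the minimal polynomial of an algebraic integer. The paper argues differently: it uses that $C_{n}$ is palindromic of even degree $\varphi(n)$ for $n\geq 3$, so that $C_{n}(y)/y^{\varphi(n)/2}$ is an integer combination of the terms $y^{k}+y^{-k}$, hence $C_{n}(\alpha(x))/\alpha(x)^{\varphi(n)/2}=\sum_{k}c_{k}L_{k}(x)$ is a monic integer polynomial of degree $\varphi(n)/2$; it then verifies the single root $2\cos(2\pi/n)$ via $\alpha(2\cos\theta)=e^{\sqrt{-1}\theta}$ and concludes by minimality of degree. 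Both arguments ultimately lean on the classical fact that \eqref{eq:psin} is the minimal polynomial of degree $\varphi(n)/2$. Your version is more self-contained at the level of root bookkeeping; the paper's version has the practical advantage of producing the explicit Lucas-polynomial expansion $\sum_{k}c_{k}L_{k}(x)$, which is exactly how the worked example $\psi_{9}(x)=1+L_{3}(x)$ and the corollary $\psi_{2^{n+2}}(x)=L_{2^{n}}(x)$ are obtained. One tiny point to make explicit in your write-up: the evenness of $\varphi(n)$ for $n\geq3$, which you do address via the pairing, is also what makes the exponent $\varphi(n)/2$ in the statement meaningful.
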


\begin{proof}
For $n\geq 3$ the cyclotomic polynomial $C_{n}( x)$ is a symmetric polynomial with integer coefficients of even degree
$\varphi ( n)$. For example:
\begin{align*}
\begin{array}{ c|c c c c c c c c }
n & 3 & 4 & 5 & 6 & 7 & 8 & 9 & \cdots \\
\hline
C_{n}( x) & 1+x+x^{2} & 1+x^{2} & 1+x+x^{2} +x^{3} +x^{4} & 1-x+x^{2} & 1+x+\cdots +x^{6} & 1+x^{4} & 1+x^{3} +x^{6} & \cdots 
\end{array}.
\end{align*}
Therefore $\frac{C_{n}(x)}{x^{\varphi ( n) /2}}$ is a sum of terms of the form $x^{k} +\frac{1}{x^{k}}$ with integer coefficients. Since $L_{k} =\alpha ^{k} +\frac{1}{\alpha ^{k}}$
we can write
$$\frac{C_{n}( \alpha ( x))}{( \alpha ( x))^{\varphi ( n) /2}} =\sum _{k=0}^{\varphi ( n) /2} c_{k} L_{k}( x) \in \mathbb{Z}[ x]$$  with integer coefficients $c_{k}$ and note that this polynomial is of degree $\varphi ( n) /2$. Since $\alpha ( 2\cos \theta ) =\cos \theta +\sqrt{\cos^{2} \theta -1} =\cos \theta +\sqrt{-\sin^{2} \theta } =e^{\sqrt{-1} \theta }$ we get 
$$C_{n}\left( \alpha \left( 2\cos\frac{2\pi }{n}\right)\right) =C_{n}\left( e^{2\sqrt{-1} \pi /n}\right) =0.$$
Therefore $\frac{C_{n}( \alpha ( x))}{( \alpha ( x))^{\varphi ( n) /2}}$ is a monic polynomial of degree $\varphi ( n) /2$ with integer coefficients and root $2\cos\frac{2\pi }{n}$. Consequently it must coincide with the minimal polynomial $\psi _{n}( x)$.
\end{proof}

For example, $ \frac{C_{9}( x)}{x^{3}} =1+x^{3} +\frac{1}{x^{3}}$ gives  $ \frac{C_{9}( \alpha ( x))}{( \alpha ( x))^{3}} =1+( \alpha ( x))^{3} +\frac{1}{( \alpha ( x))^{3}} =1+L_{3}( x) =1-3x+x^{3} =\psi _{9}( x)$.
\begin{corollary}
For $ n\geq 1$ we have $ \psi _{2^{n+2}}( x) =L_{2^{n}}( x)$.
\end{corollary}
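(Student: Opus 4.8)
The plan is to apply the preceding theorem directly to the modulus $m = 2^{n+2}$, which is legitimate since $n \geq 1$ forces $m \geq 8 > 2$ and in particular $m \geq 3$. The theorem gives
\[
\psi_{2^{n+2}}(x) = \frac{C_{2^{n+2}}(\alpha(x))}{(\alpha(x))^{\varphi(2^{n+2})/2}},
\]
so everything reduces to inserting the explicit shape of the cyclotomic polynomial and the totient value at a power of two.

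Next I would record the two elementary number-theoretic inputs. First, for any $k \geq 1$ one has $C_{2^k}(x) = x^{2^{k-1}} + 1$; this is standard and follows, for instance, from $C_{2^k}(x) = C_2\!\left(x^{2^{k-1}}\right)$ together with $C_2(x) = x+1$. Second, $\varphi(2^k) = 2^{k-1}$. Specializing to $k = n+2$ yields $C_{2^{n+2}}(y) = y^{2^{n+1}} + 1$ and $\varphi(2^{n+2})/2 = 2^n$.

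Substituting $y = \alpha(x)$ into these expressions and simplifying the resulting quotient gives
\[
\psi_{2^{n+2}}(x) = \frac{(\alpha(x))^{2^{n+1}} + 1}{(\alpha(x))^{2^n}} = (\alpha(x))^{2^n} + \frac{1}{(\alpha(x))^{2^n}}.
\]
Since $\beta(x) = 1/\alpha(x)$, Binet's formula $L_m(x) = (\alpha(x))^m + (\beta(x))^m$ identifies the right-hand side with $L_{2^n}(x)$, which is exactly the asserted equality.

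There is essentially no hard step here: the statement is a one-line specialization of the theorem, and the only point requiring care is the prime-power evaluation $C_{2^k}(x) = x^{2^{k-1}} + 1$ (the source of both the exponent $2^{n+1}$ in the numerator and, via the totient, the exponent $2^n$ in the denominator). As a consistency check one may verify the case $n = 1$ directly, where the formula reads $\psi_8(x) = L_2(x) = x^2 - 2$, in agreement with the tables of $\psi_n$ and $L_n$ given above.
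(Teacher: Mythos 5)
Your proposal is correct and is essentially identical to the paper's own argument: both specialize the preceding theorem to $n$ a power of two, use $C_{2^{k}}(x)=x^{2^{k-1}}+1$ and $\varphi(2^{n+2})/2=2^{n}$, and identify $(\alpha(x))^{2^{n}}+(\alpha(x))^{-2^{n}}$ with $L_{2^{n}}(x)$ via Binet's formula. The consistency check at $n=1$ is a nice extra but not needed.
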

\begin{proof}Since $ C_{2^{n+2}}( x) =x^{2^{n+1}} +1$ we get $ \frac{C_{2^{n+2}}( x)}{x^{2^{n}}} =x^{2^{n}} +\frac{1}{x^{2^{n}}}$ and $ ( \alpha ( x))^{2^{n}} +\frac{1}{( \alpha ( x))^{2^{n}}} =L_{2^{n}}( x) .$
\end{proof}

\begin{proposition}{\cite[Proposition 2.2]{Grubb}} \label{prop:Grubb} For $n\geq 1$ we have
$ L_{n}( x) -2=\psi _{1}( x) \left(\psi _{2}(x)\right)^{e_{n}}\prod _{k|n,k\neq 1,2} \psi _{k}^{2}( x)$
with $ e_{n} =\left( 1+( -1)^{n}\right) /2$.
\end{proposition}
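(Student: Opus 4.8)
The plan is to compare the two monic polynomials $L_n(x)-2$ and
$R_n(x):=\psi_1(x)\,\psi_2(x)^{e_n}\prod_{d\mid n,\,d\neq 1,2}\psi_d(x)^2$
by matching their roots together with multiplicities. First I would note that $L_n(x)-2$ is monic of degree $n$ (the recursion $L_n=xL_{n-1}-L_{n-2}$ gives leading term $x^n$), and that each $\psi_d$ is monic, so $R_n$ is monic as well. Two monic polynomials over $\C$ having the same multiset of roots coincide, so it suffices to check that the roots of $L_n(x)-2$, counted with the multiplicities supplied by the preceding proposition, are exactly the union over $d\mid n$ of the roots of $\psi_d$, each divisor $d$ contributing with the exponent it carries in $R_n$.

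The main step is the bookkeeping that sorts the roots $\nu_k=2\cos(2\pi k/n)$, $0\le k\le n/2$, according to which $\psi_d$ they satisfy. I would set $g=\gcd(k,n)$, $d=n/g$ and $j=k/g$, so that $d\mid n$, $\gcd(j,d)=1$, and $\nu_k=2\cos(2\pi j/d)$. For an interior index $0<k<n/2$ one cannot have $d\in\{1,2\}$ (these would force $k$ to be a multiple of $n$ or of $n/2$), so $d\ge 3$; moreover $k=(n/d)j$ turns the range $0<k<n/2$ into $0<j<d/2$. As $k$ runs through the indices with $n/\gcd(k,n)=d$ one therefore obtains precisely the $\varphi(d)/2$ values listed in \eqref{eq:psin} for $\psi_d$, each once, and conversely every root of $\psi_d$ with $d\mid n$ arises this way. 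Since the preceding proposition declares every interior $\nu_k$ to be a double root, this accounts exactly for the factor $\prod_{d\mid n,\,d\ge 3}\psi_d(x)^2$.

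It then remains to treat the two boundary values. The root $\nu_0=2$ corresponds to $g=n$, $d=1$, $j=0$, i.e. to $\psi_1(x)=x-2$, and the proposition says it is always a simple root, giving the factor $\psi_1(x)$. The value $-2$ equals $2\cos(2\pi k/n)$ only when $n$ is even and $k=n/2$, in which case $g=n/2$, $d=2$, and $-2$ is the simple root of $\psi_2(x)=x+2$; for odd $n$ the number $-2$ is not a root at all. As $e_n=(1+(-1)^n)/2$ equals $1$ for even $n$ and $0$ for odd $n$, this is exactly the factor $\psi_2(x)^{e_n}$. Collecting the three contributions shows that $L_n(x)-2$ and $R_n(x)$ have the same roots with the same multiplicities, whence equality follows from both being monic.

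As a final consistency check I would verify the degree: $\deg R_n=1+e_n+\sum_{d\mid n,\,d\ge 3}\varphi(d)$, which, using $\sum_{d\mid n}\varphi(d)=n$ together with $\varphi(1)=\varphi(2)=1$, equals $n$ in both parities and matches $\deg(L_n-2)=n$. I expect the only delicate point to be the case analysis at the boundary, namely pinning down that $-2$ occurs as a (simple) root precisely when $n$ is even, since this is exactly what produces the exponent $e_n$; the identification of the interior roots with the root sets of the $\psi_d$ is a routine, if slightly fiddly, divisor computation.
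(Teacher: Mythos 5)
Your proof is correct and follows essentially the same route as the paper's (much terser) argument: both deduce the factorization from the root multiplicities established in the preceding proposition together with the observation that $\nu_k=2\cos(2\pi k/n)$ with $\gcd(k,n)=n/d$ is a root of $\psi_d$. You simply carry out in full the divisor bookkeeping that the paper leaves implicit.
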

\begin{proof}
    This follows from the multiplicities of the roots of $ L_{n}( x) -2$ and the fact that for $ \gcd( j,n) =d$ the minimal polynomial of $ 2\cos( 2\pi j/n)$ is $ \psi _{n/d}( x)$.
\end{proof}

\section{Some properties of $ Z_{n}( x)$}
The formula $ Z_{n}( x) =2-L_{n}( 2-x)$ shows that $ Z_{n}( x)$ is closely related to the Lucas polynomials $ L_{n}( x)$. The first terms are:
$$
\begin{array}{ c|c c c c c c }
n & 1 & 2 & 3 & 4 & 5 & \cdots \\
\hline
Z_{n}( x) & x & 4x-x^{2} & 9x-6x^{2} +x^{3} & 16x-20x^{2} +8x^{3} -x^{4} & 25x-50x^{2} +35x^{3} -10x^{4} +x^{5} & \cdots 
\end{array}.$$
Note that $ -Z_{n}( -x)$ is a monic polynomial of degree $ n$ with positive coefficients. Since $ L_{n}( 2) =2$ we have $ Z_{n}( 0) =0$.

Let $ c( n,k) =\left[ x^{k}\right] L_{n}( 2+x)$. From $ L_{n}( 2+x) =( 2+x) L_{n-1}( 2+x) -L_{n-2}( 2+x)$ we get
 $ c( n,k) =2c( n-1,k) +c( n-1,k-1) -c( n-2,k)$. Since $ c( 0,1) =0$ and $ c( 1,1) =1$ we get with induction that
$ c( n,1) =n^{2}$. It is easily verfied that $ c( n,k) =\binom{n+k-1}{n-k}\frac{n}{k}$. Therefore, we get
\begin{align}
Z_{n}( x) =\sum _{k=1}^{n}( -1)^{k-1}\binom{n+k-1}{n-k}\frac{n}{k} x^{k} .
\end{align}

With $ \lambda ( x) =\alpha ( 2-x) =\frac{2-x+\sqrt{x^{2} -4x}}{2}$ we get
\begin{align} 
Z_{n}( x) =\frac{\left( \lambda ( x)^{n} -1\right)^{2}}{\lambda ( x)^{n}}
\end{align}
because $ Z_{n}( x) =2-L_{n}( 2-x) =2-\left(( \lambda ( x))^{n} +\frac{1}{( \lambda ( x))^{n}}\right) =-\frac{\left(( \lambda ( x))^{n} -1\right)^{2}}{( \lambda ( x))^{n}}.$

Another connection with the Lucas polynomials is the following.

\begin{remark} \label{rm:ZL}
\begin{align*}
Z_{2n+1}\left( x^{2}\right) =( L_{2n+1}( x))^{2} ,\\
Z_{2n}\left( x^{2}\right) =4-( L_{2n}( x))^{2} .
\end{align*}
\end{remark}

\begin{proof}
For odd $ m$ we have $ Z_{m}\left( x^{2}\right) =2-L_{m}\left( 2-x^{2}\right) =2-L_{m}( -L_{2}( x)) =2+L_{2m}( x) =( L_{m}( x))^{2}$
because of Equation \eqref{eq:Levensq}. The second identity follows from
\begin{align*}
Z_{2n}\left( x^{2}\right) &=2-L_{2n}\left( 2-x^{2}\right) =2-L_{2n}\left( 2-\left( z+\frac{1}{z}\right)^{2}\right) =2-L_{2n}\left( z^{2} +\frac{1}{z^{2}}\right) =2-\left( z^{4n} +\frac{1}{z^{4n}}\right)\\
&=4-\left( z^{2n} +\frac{1}{z^{2n}}\right)^{2} =4-( L_{2n}( x))^{2} .
\end{align*}
\end{proof}

\begin{proposition}
For arbitrary $ u\neq 0$ we have
\begin{align}
Z_{n}\left( -\left( u-\frac{1}{u}\right)^{2}\right)&=-\left( u^{n} -\frac{1}{u^{n}}\right)^{2} ,\\
Z_{mn}( x) &=Z_{m}( Z_{n}( x)) .
\end{align}
\end{proposition}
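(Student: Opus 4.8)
The plan is to handle the two identities in turn, reducing each to facts already recorded for the Lucas polynomials.

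For the first identity I would substitute $x=-\left(u-\frac1u\right)^2$ directly into the defining relation $Z_n(x)=2-L_n(2-x)$ from \eqref{eq:Zdef}. The point is that the argument $2-x$ collapses to a sum of the form $z+z^{-1}$: indeed $2-x=2+\left(u-\frac1u\right)^2=u^2+\frac1{u^2}$. Setting $z=u^2$ in the characterization \eqref{eq:LucTrig} then gives $L_n\!\left(u^2+\frac1{u^2}\right)=u^{2n}+\frac1{u^{2n}}$, so that
\begin{align*}
Z_n\!\left(-\left(u-\tfrac1u\right)^2\right)=2-\left(u^{2n}+\tfrac1{u^{2n}}\right)=-\left(u^{2n}-2+\tfrac1{u^{2n}}\right)=-\left(u^n-\tfrac1{u^n}\right)^2,
\end{align*}
which is precisely the claim. (This can equally be read off from the already-derived formula $Z_n(x)=-\frac{(\lambda(x)^n-1)^2}{\lambda(x)^n}$ after checking $\lambda\!\left(-(u-\frac1u)^2\right)=u^2$, but going through \eqref{eq:LucTrig} avoids any bookkeeping about the branch of the square root.)

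For the composition identity $Z_{mn}(x)=Z_m(Z_n(x))$ the cleanest route is again through \eqref{eq:Zdef}. I would first observe that $2-Z_n(x)=L_n(2-x)$, so that feeding $Z_n(x)$ into $Z_m$ telescopes the two shifts by $2$:
\begin{align*}
Z_m(Z_n(x))=2-L_m\!\left(2-Z_n(x)\right)=2-L_m\!\left(L_n(2-x)\right)=2-L_{mn}(2-x)=Z_{mn}(x),
\end{align*}
where the third equality is the multiplicativity \eqref{eq:Lmultiplicative} of the Lucas polynomials. This is a genuine identity of polynomials, so no density argument is needed.

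Should one instead wish to derive the composition identity from the first part, the mechanism is equally transparent: writing $x=-\left(u-\frac1u\right)^2$, the first identity gives $Z_n(x)=-\left(u^n-\frac1{u^n}\right)^2=-\left(v-\frac1v\right)^2$ with $v:=u^n$, and applying the first identity once more to $Z_m$ yields $Z_m(Z_n(x))=-\left(v^m-\frac1{v^m}\right)^2=-\left(u^{mn}-\frac1{u^{mn}}\right)^2=Z_{mn}(x)$. Since $x=-(u-\frac1u)^2$ sweeps out the infinite set $(-\infty,0]$ as $u$ ranges over the positive reals, agreement there forces equality of the two polynomials. Either way there is no genuine obstacle: the substance of the proposition is the single observation that the first identity delivers $Z_n(x)$ in exactly the shape $-(v-\frac1v)^2$ needed to iterate, or equivalently that the involution $x\mapsto 2-x$ in \eqref{eq:Zdef} is inverted by itself, so that the composition reduces to \eqref{eq:Lmultiplicative}.
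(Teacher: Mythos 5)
Your proof is correct and follows essentially the same route as the paper: both identities are reduced to \eqref{eq:Zdef} together with \eqref{eq:LucTrig} for the first and \eqref{eq:Lmultiplicative} for the second, exactly as in the paper's own argument. The alternative derivation of the composition identity from the first part is a nice extra but not needed.
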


\begin{proof}
Note that $ Z_{n}\left( -\left( u-\frac{1}{u}\right)^{2}\right) =2-L_{n}\left( 2+\left( u-\frac{1}{u}\right)^{2}\right) =2-L_{n}\left( u^{2} +\frac{1}{u^{2}}\right) =2-\left( u^{2n} +\frac{1}{u^{2n}}\right) =-\left( u^{n} -\frac{1}{u^{n}}\right)^{2}$.
On the other hand, $ Z_{mn}( x) =2-L_{mn}( 2-x) =2-L_{m}( L_{n}( 2-x)) =2-L_{m}( 2-Z_{n}( x)) =Z_{m}( Z_{n}( x))$.
\end{proof}

\section{Proof of Theorem \ref{thm:GohWild}}

Let $ z_{n}( x) =( -1)^{n-1} Z_{n}( x) =( -1)^{n-1}( 2-L_{n}( 2-x))$ be the monic versions of the $ Z_{n}( x)$. The first terms are:
$$ \begin{array}{ c|c c c c c c }
n & 1 & 2 & 3 & 4 & 5 & \cdots \\
\hline
z_{n}( x) & x & -4x+x^{2} & 9x-6x^{2} +x^{3} & -16x+20x^{2} -8x^{3} +x^{4} & 25x-50x^{2} +35x^{3} -10x^{4} +x^{5} & \cdots 
\end{array}.$$
Proposition \ref{prop:Grubb} gives
\begin{align}
z_{n}( x) =\psi _{1}( 2-x)( \psi _{2}( 2-x))^{e_{n}}\prod _{k|n,\ k\neq 1,2}( \psi _{k}( 2-x))^{2} .
\end{align}
Since all the $ z_{n}( x)$ are monic we can replace all the polynomials of the right hand side by their monic versions.
This gives
\begin{align}\label{eq:zFact}
z_{n}( x) =\phi _{1}(x)( \phi _{2}(x))^{e_{n}}\prod _{k|n,\ k\neq 1,2}( \phi _{k}( x))^{2}
\end{align}
with $ \phi _{1}( x) =x,\ \phi _{2}( x) =x-4,\ \phi _{n}( x) =( -1)^{\varphi ( n) /2} \psi _{n}( 2-x)$.

Observing that $ 2-2\cos\left(\frac{2\pi k}{n}\right) =4\sin^{2}\left(\frac{k\pi }{n}\right)$ we get from Equation \eqref{eq:psin} by changing $ x\mapsto 2-x$
\begin{align}
\phi _{n}( x) =\prod _{\gcd( k,n) =1,\ 0< k< n/2}\left( x-4\sin^{2}\left(\frac{k\pi }{n}\right)\right) =( -1)^{\varphi ( n) /2}\frac{C_{n}( \lambda ( x))}{( \lambda ( x))^{\varphi ( n) /2}}
\end{align}
for $ n\geq 3$ with $ \lambda ( x) =\alpha ( 2-x) =\frac{2-x+\sqrt{x^{2} -4x}}{2}$. Thus $ \phi _{n}( x)$ is the minimal polynomial of $ 4\sin^{2}\left(\frac{\pi }{n}\right)$. The first terms are:
\begin{align*}
\begin{array}{ c|c c c c c c c }
n & 1 & 2 & 3 & 4 & 5 & 6 & 7\\
\hline
\phi _{n}( x) & x & -4+x & -3+x & -2+x & 5-5x+x^{2} & -1+x & -7+14x-7x^{2} +x^{3}
\end{array} \\
\ \ \ \ \ \ \ \ \ \ \ \ \ \ \ \ \ \ \ \ \ \ \ \ \ \ \ \ \
\begin{array}{ c c c }
8 & 9 & \cdots \\
\hline
2-4x+x^{2} & -3+9x-6x^{2} +x^{3} & \cdots 
\end{array} .
\end{align*}
(see \cite[entry A232633]{OEIS}).

Let 
\begin{align}
\Phi _{n}( x) &:=( \phi _{n}( x))^{2} \ \text{for} \ n\geq 3,\\
\nonumber\Phi _{1}( x) &:=x,\ \Phi _{2}( x) :=4-x=-\phi _{2}( x)
\end{align}
such that $ \phi _{1}( x)( \phi _{2}( x))^{e_{n}} =( -1)^{n-1} \Phi _{1}( x)( \Phi _{2}( x))^{e_{n}}$. Then
$ \Phi _{n}( x) \in \mathbb{Z}[ x]$ \ with $ \deg \Phi _{n} =\varphi ( n)$.

With this the proof of Theorem \ref{thm:GohWild} is a consequence of Equation \eqref{eq:zFact}. 

\section{How to calculate the $\phi_k(x)$}
\subsection{Odd index}
For odd $ m$ we know from Remark \ref{rm:ZL} that $ Z_{m}\left( x^{2}\right) =( L_{m}( x))^{2}$. This implies
\begin{align}
    L_{m}( x) =x\prod _{d|m,\ d >1} \phi _{d}\left( x^{2}\right).
\end{align}
For example, 
\begin{align*}
L_{1}( x) &=x,\ L_{3}( x) =x\left( x^{2} -3\right) =x\phi _{3}\left( x^{2}\right) ,\ L_{5}( x) =x\left( x^{4} -5x+5\right) =x\phi _{5}\left( x^{2}\right) ,\\
L_{7}( x) &=x\left( x^{6} -7x^{4} +14x^{2} -7\right) =x\phi _{7}\left( x^{2}\right) ,\ L_{9}( x) =x\left( x^{2} -3\right)\left( x^{6} -6x^{4} +9x^{2} -3\right) =x\phi _{3}\left( x^{2}\right) \phi _{9}\left( x^{2}\right) .
\end{align*}
Thus, for odd $m$ the polynomial $\phi _{m}( x)$ can be easily calculated in terms of the Lucas polynomials.

\subsection{The case indexed by powers of $2$}
Observing that $ Z_{2^{k}}( x) =\prod _{j=0}^{k} \Phi _{2^{j}}( x)$ and $ \Phi _{2^{k}}( x) =( \phi _{2^{k}}( x))^{2}$ we get for $ k\geq 2$ that
\begin{align}
    \Phi _{2^{k}}( x) =\frac{Z_{2^{k}}( x)}{Z_{2^{k-1}}( x)} =\frac{1}{( \lambda ( x))^{2^{k-1}}}\left(\frac{( \lambda ( x))^{2^{k}} -1}{( \lambda ( x))^{2^{k-1}} -1}\right)^{2} =\frac{\left( \lambda ( x))^{2^{k-1}} +1\right)^2} {( \lambda ( x))^{2^{k-1}}} =\left(( \lambda ( x))^{2^{k-2}} +\frac{1}{\lambda ( x))^{2^{k-2}}}\right)^2
    \end{align}
implies
\begin{align}\label{eq:phiRecPo2}
    \phi _{2^{k}}( x) =( \lambda ( x))^{2^{k-2}} +\frac{1}{( \lambda ( x))^{2^{k-2}}} =\left(( \lambda ( x))^{2^{k-3}} +\frac{1}{( \lambda ( x))^{2^{k-3}}}\right) -2=( \phi _{2^{k-1}}( x))^{2} -2
    \end{align}
for $ k\geq 3$. The first terms are:
$$ \begin{array}{ c|c c c c c c}
n & 1 & 2 & 3 & 4 & 5&\cdots\\
\hline
\phi _{2^{n}}( x) & x & -4+x & -2+x & 2-4x+x^{2} & 2-16x+20x^{2} -8x^{3} +x^{4}&\cdots
\end{array}.$$

The same consideration for $ L_{2^{n}}\left( z+\frac{1}{z}\right) =z^{2^{n}} +\frac{1}{z^{2^{n}}}$ together with $ L_{2}( x) =x^{2} -2$ and $ \phi _{2^{2}}\left( x^{2}\right) =x^{2} -2$ gives
\begin{align}
    \phi _{2^{n+1}}\left( x^{2}\right) =L_{2^{n}}( x)
    \end{align}
for $ n\geq 1$.
\subsection{General case}

\begin{theorem}
For odd $ m\geq 3$ and $ k\geq 2$
\begin{align}
\label{eq:phievenComp}\phi _{2m}( x) &=( -1)^{\varphi (m) /2} \phi _{m}( 4-x) =( -1)^{\varphi (m) /2}( \phi _{m} \circ \Phi _{2})( x) ,\\
\label{eq:phiPot2Comp}\phi _{2^{k} m}( x) &=\phi _{m}\left(( \phi _{2^{k}}( x))^{2}\right) =( \phi _{m} \circ \Phi _{2^{k}})( x) .
\end{align}    
\end{theorem}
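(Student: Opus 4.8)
The plan is to pass to the ``$\Lambda$-variable'' and reduce both displayed formulas to a single computation plus a standard cyclotomic identity. Writing $x = 2 - \Lambda - \Lambda^{-1}$, so that $\lambda(x) = \Lambda$, the formula for $\phi_n$ established above turns each $\phi_n$ with $n \geq 3$ into the symmetric Laurent polynomial
\begin{align*}
\phi_n(x) = (-1)^{\varphi(n)/2}\,\frac{C_n(\Lambda)}{\Lambda^{\varphi(n)/2}} .
\end{align*}
Because $C_n$ is self-reciprocal of even degree $\varphi(n)$ for $n \geq 2$, this expression is invariant under $\Lambda \mapsto \Lambda^{-1}$, and this will let me disregard the choice of branch of $\lambda$ everywhere below. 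I would treat the two identities \eqref{eq:phievenComp} and \eqref{eq:phiPot2Comp} as the cases $k = 1$ and $k \geq 2$ of the uniform statement
\begin{align*}
\phi_{2^k m}(x) = (-1)^{\varphi(2^k m)/2}\,\phi_m\bigl(\Phi_{2^k}(x)\bigr),
\end{align*}
keeping in mind that $\Phi_2(x) = 4 - x$ is affine while $\Phi_{2^k}(x) = (\phi_{2^k}(x))^2$ for $k \geq 2$.

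First I would compute how $\Phi_{2^k}$ acts in the $\Lambda$-variable. Using $\Lambda + \Lambda^{-1} = 2 - x$ for $k = 1$, and for $k \geq 2$ the relation $\phi_{2^k}(x) = \pm(\Lambda^{2^{k-2}} + \Lambda^{-2^{k-2}})$ from \eqref{eq:phiRecPo2} (whose sign is irrelevant after squaring), one finds uniformly
\begin{align*}
2 - \Phi_{2^k}(x) = -\bigl(\Lambda^{2^{k-1}} + \Lambda^{-2^{k-1}}\bigr),
\end{align*}
so that $\lambda\bigl(\Phi_{2^k}(x)\bigr) = -\Lambda^{2^{k-1}}$ up to the harmless replacement $\Lambda \mapsto \Lambda^{-1}$. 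In short, composing with $\Phi_{2^k}$ corresponds to the substitution $\Lambda \mapsto -\Lambda^{2^{k-1}}$ on the level of the $\Lambda$-variable.

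Next I would invoke the cyclotomic identity
\begin{align*}
C_{2^k m}(\Lambda) = C_m\bigl(-\Lambda^{2^{k-1}}\bigr) \qquad (m \ \text{odd}, \ m \geq 3, \ k \geq 1),
\end{align*}
which results from iterating the standard relation $C_{pN}(y) = C_N(y^p)$ for $p \mid N$ to get $C_{2^k m}(y) = C_{2m}(y^{2^{k-1}})$, followed by $C_{2m}(y) = C_m(-y)$ for odd $m \geq 3$. Substituting the outcome of the first step into the $\Lambda$-expression for $\phi_m$ and simplifying gives
\begin{align*}
\phi_m\bigl(\Phi_{2^k}(x)\bigr) = (-1)^{\varphi(m)/2}\,\frac{C_m(-\Lambda^{2^{k-1}})}{(-\Lambda^{2^{k-1}})^{\varphi(m)/2}} = \frac{C_{2^k m}(\Lambda)}{\Lambda^{2^{k-2}\varphi(m)}},
\end{align*}
where the prefactor $(-1)^{\varphi(m)/2}$ cancels the sign produced by $(-\Lambda^{2^{k-1}})^{\varphi(m)/2}$. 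Comparing this with $\phi_{2^k m}(x) = (-1)^{\varphi(2^k m)/2} C_{2^k m}(\Lambda)/\Lambda^{\varphi(2^k m)/2}$ and using $\varphi(2^k m)/2 = 2^{k-2}\varphi(m)$ yields the uniform statement; a quick degree count (both sides have degree $2^{k-2}\varphi(m)$) confirms consistency.

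The step I expect to require the most care — and which produces the visible difference between the two formulas — is the sign bookkeeping. The exponent $\varphi(2^k m)/2 = 2^{k-2}\varphi(m)$ equals $\varphi(m)/2$ when $k = 1$, but is even whenever $k \geq 2$ (either $2^{k-2}$ is even, or, for $k = 2$, $\varphi(m)$ is even since $m \geq 3$). Hence $(-1)^{\varphi(2^k m)/2}$ survives as $(-1)^{\varphi(m)/2}$ in \eqref{eq:phievenComp} but collapses to $+1$ in \eqref{eq:phiPot2Comp}; this asymmetry traces back to $\Phi_2$ being affine while $\Phi_{2^k}$ is a perfect square for $k \geq 2$. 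The branch ambiguity of $\lambda$ is absorbed by the self-reciprocity of $C_m$, and the cyclotomic identity is classical, so beyond this accounting no real obstacle remains. As an independent check on \eqref{eq:phievenComp}, one may verify directly that the map $j \mapsto m - 2j$ carries the root labels $\{\,j : \gcd(j,m)=1,\ 0<j<m/2\,\}$ of $\phi_m(4-x)$ bijectively onto the root labels $\{\,k : \gcd(k,2m)=1,\ 0<k<m\,\}$ of $\phi_{2m}$, since $4 - 4\sin^2(j\pi/m) = 4\sin^2((m-2j)\pi/(2m))$.
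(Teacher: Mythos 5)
Your proof is correct, but it takes a genuinely different route from the paper. The paper proves the two identities by two separate arguments: for \eqref{eq:phievenComp} it establishes $Z_{2m}(x)=Z_m(x)Z_m(4-x)$ (via $L_{2m}-2=(L_m-2)(L_m+2)$), compares the two resulting divisor factorizations to get $\Phi_{2m}(x)=\Phi_m(4-x)$, and extracts monic square roots; for \eqref{eq:phiPot2Comp} it uses Nguyen's argument that $\phi_{2^km}$ is irreducible of the same degree as the composition, so it suffices to check that the single root $4\sin^2(\pi/(2^km))$ is sent to a root of $\phi_m$ by $\Phi_{2^k}$, which is done by iterating $f(x)=x^2-2$. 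You instead give one uniform computation covering $k\geq 1$: pass to the variable $\Lambda$ with $x=2-\Lambda-\Lambda^{-1}$, observe that composition with $\Phi_{2^k}$ is the substitution $\Lambda\mapsto-\Lambda^{2^{k-1}}$ (up to the inversion $\Lambda\mapsto\Lambda^{-1}$, which is absorbed by self-reciprocity of $C_n$), and invoke the classical identities $C_{pN}(y)=C_N(y^p)$ for $p\mid N$ and $C_{2m}(y)=C_m(-y)$. I checked the sign bookkeeping: $\varphi(2^km)/2=2^{k-2}\varphi(m)$ is $\varphi(m)/2$ for $k=1$ and even for $k\geq 2$, which correctly reproduces the asymmetry between the two displayed formulas, and your handling of the sign ambiguity in \eqref{eq:phiRecPo2} at $k=2$ (where $\phi_4(x)=-(\Lambda+\Lambda^{-1})$) is right since only the square enters. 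What your approach buys is uniformity and transparency of the sign structure, at the cost of importing two standard cyclotomic identities not proved in the paper; the paper's route is more self-contained (everything reduces to Lucas-polynomial manipulations plus the already-established irreducibility of $\phi_n$ as a minimal polynomial) but splits into two ad hoc arguments. One trivial slip: you say $C_n$ is self-reciprocal of even degree for $n\geq 2$, but $\varphi(2)=1$; this should read $n\geq 3$, which is all you use. Your closing root-relabelling check of \eqref{eq:phievenComp} via $j\mapsto m-2j$ is also correct.
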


\begin{proof}
We have $ Z_{2m}( x) =Z_{m}( x) Z_{m}( 4-x)$ because $ Z_{m}( 4-x) =2-L_{m}( 2-4+x) =2-L_{m}( x-2) =2+L_{m}(2- x)$
and $ Z_{2m}( x) =2-L_{2m}( 2-x) =( 2-L_{m}( 2-x))( 2+L_{m}( 2-x))$ (see Equation \eqref{eq:L1stBinomi}). 

Comparing $ Z_{2m}( x) =\prod _{d|2m} \Phi _{d}( x) =\prod _{d|m} \Phi _{d}( x) \Phi _{2d}( x)$ and 
$$ Z_{2m}( x) =Z_{m}( x) Z_{m}( 4-x) =\prod _{d|m} \Phi _{d}( x) \Phi _{d}( 4-x)$$ gives
$ \Phi _{2m}( x) =\Phi _{m}( 4-x)$ for odd $ m\geq 3$. Since $ \Phi _{m}( x) =( \phi _{m}( x))^{2}$ and $ \phi _{m}$ is monic
 of degree $ \varphi ( m) /2$ we get Equation \eqref{eq:phievenComp}.

We reproduce here the proof of Tri Nguyen \cite{TriNguyen} of Equation \eqref{eq:phiPot2Comp}. It is based on the fact that $ \phi _{n}( x)$ is the minimal polynomial of $ 4\sin^{2}\left(\frac{\pi }{n}\right)$ and has degree $ \deg \phi _{n} =\frac{\varphi(n) }{2}$. It is sufficient to show that the root $ \gamma _{k} =4\sin^{2}\left(\frac{\pi }{2^{k} m}\right)$ of $ \phi _{2^{k} m}( x)$ is also a root of $ \phi _{m}\left(( \phi _{2^{k}}( x))^{2}\right)$ as $ \phi _{2^{k} m}( x)$ is irreducible and $ \deg\left( \phi _{m} \circ ( \phi _{2^{k}}( x))^{2}\right) =\frac{\varphi ( m)}{2} 2^{k-1} =\frac{\varphi \left( 2^{k} m\right)}{2} =\deg \phi _{2^{k} m}$.

Let $ f^{( 1)}( x) =f( x) =x^{2} -2$ and $ f^{( n)}( x) =f\circ f^{( n-1)}( x)$ for $ n\geq 2$. By Equation \eqref{eq:phiRecPo2}
$ f( \phi _{2^{k-1}}( x)) =\phi _{2^{k}}( x)$, and therefore we have for $ 1\leq j< k$ 
$$ f^{( k-j)}( \phi _{2^{j}}( x)) =\phi _{2^{k}}( x).$$

Let us first show by induction that for any $ \theta \in \mathbb{R}$
\begin{align}\label{eq:itid} 
f^{( n)}\left( 4\sin^{2}( \theta ) -2\right) =2-4\sin^{2}\left( 2^{n} \theta \right) .
\end{align}
This is true for $ n=1$ since
$$f\left( 4\sin^{2}( \theta )\right) -2) =\left( 4\sin^{2}( \theta ) -2\right)^{2} -2=\left( 2\cos^{2}( \theta ) -2\sin^{2}( \theta )\right)^{2} -2=4\cos^{2}( 2\theta ) -2=2-4\sin^{2}( 2\theta ) .$$
Equation \eqref{eq:itid} follows now from
$$ f^{( n)}\left( 4\sin^{2}( \theta )\right) -2) =f\left( f^{( n-1)}\left( 4\sin^{2}( \theta ) -2\right)\right) =f\left( 2-4\sin^{2}\left( 2^{n-1} \theta \right)\right) =2-4\sin^{2}\left( 2^{n} \theta \right).$$

Now let us compute $ \phi _{2^{k}}( \gamma _{k})$. For $ k=2$ we find
$$ \phi _{4}( \gamma _{2}) =\gamma _{2} -2=4\sin^{2}\left(\frac{\pi }{4m}\right) -2=2\sin^{2}\left(\frac{\pi }{4m}\right) -2\cos^{2}\left(\frac{\pi }{4m}\right) =-2\cos\left(\frac{\pi }{2m}\right) ,$$
and for $ k >2$
\begin{align*}
\phi _{2^{k}}( \gamma _{k}) &=f^{( k-2)}( \phi _{4}( \gamma _{k})) =f^{( k-2)}( \gamma _{k} -2) =f^{( k-2)}\left( 4\sin^{2}\left(\frac{\pi }{2^{k} m}\right) -1\right) =2-4\sin^{2}\left(\frac{2^{k-2} \pi }{2^{k} m}\right)\\
&=2-4\sin^{2}\left(\frac{\pi }{4m}\right) =2\cos\left(\frac{\pi }{2m}\right).
\end{align*}

Finally, using Equation \eqref{eq:phievenComp} we get
$$ \left( \phi _{m} \circ \phi _{2^{k}}^{2}\right)( \gamma _{k}) =\phi _{m}\left( 4\cos^{2}\left(\frac{\pi }{2m}\right)\right) =( -1)^{\varphi ( m) /2} \phi _{2m}\left( 4\sin^{2}\left(\frac{\pi }{2m}\right)\right) =0.$$
\end{proof}

\begin{remark}
    The relation $ Z_{m}( \Phi _{2^{n}}( x)) =\Phi _{2^{n}}( Z_{m}( x))$ holds for $ n\geq 2$.
\end{remark}
\begin{proof}
For $ n=2$ we have $ \Phi _{4}( x) =( x-2)^{2}$. This gives
$ Z_{m}( \Phi _{4}( x)) =Z_{m}\left(( x-2)^{2}\right) =( L_{m}( x-2))^{2} =( L_{m}( 2-x))^{2} =( 2-Z_{m}( x))^{2} =\Phi _{4}( Z_{m}( x))$.    
By induction we get
\begin{align*}
Z_{m}( \Phi _{2^{n+1}}( x)) &=Z_{m}\left(\left(( \phi _{2^{n}}( x))^{2} -2\right)^{2}\right) =\left( L_{m}\left(( \phi _{2^{n}}( x))^{2} -2\right)\right)^{2} =\left( 2-Z_{m}\left(( \phi _{2^{n}}( x))^{2}\right)\right)^{2}\\
&=( 2-Z_{m}( \Phi _{2^{n}}( x)))^{2} =( 2-\Phi _{2^{n}}( Z_{m}( x)))^{2} =\left(( \phi _{2^{n}}( Z_{m}( x)))^{2} -2\right)^{2}\\
&=( \phi _{2^{n+1}}( Z_{m}( x)))^{2} =\Phi _{2^{n+1}}( Z_{m}( x)),
\end{align*}
where we have been using Remark \ref{rm:ZL} and Equation \eqref{eq:phiRecPo2}. 
\end{proof}

\section{Final remarks}

Let us finally show some connection with the Fibonacci numbers $ ( F_{n})_{n\geq 0} =( 0,1,1,2,3,5,8,13,21,\dotsc )$.
They satisfy $ F_{n} =F_{n-1} +F_{n-2}$ \ for $ n\geq 2$ with $ F_{0} =0,\ F_{1} =1$ and Binet's Formula gives
\begin{align}
    F_{n} =\frac{\rho ^{n} -\sigma ^{n}}{\rho -\sigma }
    \end{align}
with $ \rho =\frac{1+\sqrt{5}}{2}$ and $ \sigma =\frac{-1}{\rho } =\frac{1-\sqrt{5}}{2}$. It is known (see \cite{Nowicki}) that the Fibonacci numbers form what is called a divisibility sequence.

Since $ \lambda ( 5) =\frac{2-5+\sqrt{25-20}}{2} =\frac{-3+\sqrt{5}}{2} =-\left(\frac{1-\sqrt{5}}{2}\right)^2 =-\sigma ^{2}$ we get
$ ( \lambda ( 5))^{n} +\frac{1}{( \lambda ( 5))^{n}} =( -1)^{n}\left( \sigma ^{2n} +\rho ^{2n}\right) =( -1)^{n}\left( 5\left(\frac{\rho ^{n} -\sigma ^{n}}{\sqrt{5}}\right) +2( -1)^{n}\right) =5( -1)^{n} F_{n}^{2} +2,$
which implies
\begin{align}
    Z_{n}( 5) =( -1)^{n-1} 5F_{n}^{2} .
    \end{align}

Theorem \ref{thm:GohWild} gives then the factorization of the Fibonacci numbers into primitive parts (cf. \cite[entry A061446]{OEIS} and \cite{Nowicki})
$$ F_{n} =\prod _{d|n} p_{d}$$
with $p_1=1$ and $p_{n} =|\phi _{n}( 5) |$ for $n\geq 2$. We get:
$$ \begin{array}{ c|c c c c c c c c c c c c c c c c c }
n & 1 & 2 & 3 & 4 & 5 & 6 & 7 & 8 & 9 & 10 & 11 & 12 & 13 & 14 & 15 & 16 & \\
\hline
\phi _{n}( 5) & 5 & 1 & -2 & -3 & 5 & -4 & -13 & 7 & -17 & 11 & -89 & 6 & 233 & -29 & 61 & 47  & \cdots 
\end{array} .$$
For example
$$ \begin{array}{ c|c c c c c c c c c }
n & 1 & 2 & 3 & 4 & 5 & 6 & 7 & 8 & 9\\
\hline
F_{n} =\prod _{d|n} p_{d} & 1 & 1\cdot 1 & 1\cdot 2 & 1\cdot 1\cdot 3 & 1\cdot 5 & 1\cdot 1\cdot 2\cdot 4 & 1\cdot 13 & 1\cdot 1\cdot 3\cdot 7 & 1\cdot 2\cdot 17
\end{array} .$$ 
\bibliographystyle{amsplain}
\bibliography{gohwild.bib}

\end{document}